\documentclass[11pt]{amsart}
\usepackage{amsfonts}
\usepackage{amscd,amssymb,float}

\setcounter{MaxMatrixCols}{10}

\newtheorem{theorem}{Theorem}
\newtheorem{lm}{Lemma}
\newtheorem{prop}{Proposition}

\newtheorem{re}{Remark}

\numberwithin{equation}{section} \numberwithin{figure}{section}

\input{tcilatex}

\begin{document}
\title{\textbf{Convergence of Einstein Yang-Mills Systems}}
\author{ Hongliang Shao }
\date{}
\maketitle

\begin{abstract}
In this paper, we prove a convergence theorem for sequences of
Einstein Yang-Mills systems on $U(1) $-bundles over  closed
$n$-manifolds with some bounds for volumes, diameters, $L^{2}$-norms
of  bundle curvatures  and  $L^{\frac{n}{2}}$-norms of  curvature
tensors.  This result is a generalization of earlier compactness
theorems for Einstein manifolds.
\end{abstract}

\section{Introduction}

A Riemannian metric $g$ on a smooth manifold $M$ is called an Einstein
metric with Einstein constant $\lambda $, if $g$ has constant Ricci
curvature $\lambda $, i.e.,
\begin{equation*}
\mathrm{Ric}=\lambda g
\end{equation*}%
(cf. \cite{Be}). Einstein metrics are considered as the nicest
metrics on manifolds. Nonetheless, it is well-known that some
manifolds wouldn't
admit any Einstein metric (cf. \cite{Be}, \cite{Hi}).\ In \cite{St} and \cite%
{Yo}, the notion of Einstein Yang-Mills system is introduced as a
generalization of Einstein metrics by coupling Einstein equations with
Yang-Mills equations. Let $\mathcal{L}$ be a principal $U(1)$-bundle over a
smooth manifold $M$. If a Riemannian metric $g$ on $M$ and a connection $A$
of $\mathcal{L}$ satisfy the following equations
\begin{equation}
\left\{
\begin{array}{c}
\mathrm{Ric}-\frac{1}{2}\eta =\lambda g \\
\text{ \ \ \ \ \ \ \ }d^{\ast }F=0\text{ ,\ }%
\end{array}%
\right.  \tag{1}  \label{eq:1}
\end{equation}%
where $F$ is the curvature of $A$ and $\eta _{ij}=g^{kl}F_{ik}F_{jl}$, then $%
(M,\mathcal{L},g,F,\lambda )$ is called an Einstein Yang-Mills (EYM for
short) system and $\lambda $ is called the Einstein Yang-Mills constant.
Besides Einstein metrics with flat $U(1)$-bundles, other solutions of (\ref%
{eq:1}) are obtained in Section 2 of \cite{St}. The parabolic version of (%
\ref{eq:1}), so called Ricci Yang-Mills flow, is studied for solving (\ref%
{eq:1}) in \cite{St} and \cite{Yo}. A solution of (\ref{eq:1}) also solves
Einstein-Maxwell equations which are studied in the literatures of both
physics and mathematics (cf. \cite{LeB} and references in it). In this
paper, we study the compactness of families of EYM systems.

The convergence of Einstein manifolds in the Gromov-Hausdorff topology has
been studied by many authors (cf. \cite{An1}, \cite{An2}, \cite{BKN}, \cite%
{CT}, \cite{Ti} etc). Gromov's pre-compactness theorem says that, if $%
(M_{i},g_{i})$ is a family of Riemannian $n$-manifolds with diameters
bounded from above and Ricci curvature bounded from below, a subsequence of $%
(M_{i},g_{i})$ converges to a compact length space $(X,d_{X})$ in
the Gromov-Hausdorff sense (cf. \cite{Go}). In addition, if the
Ricci
curvatures $|\mathrm{Ric}(g_{i})|<\mu $, the volumes $\mathrm{Vol}%
_{M_{i}}>v>0$ for constants $\mu $ and $v$ independent of $i$, and the $L^{%
\frac{n}{2}}$-norms of the curvature tensors of $g_{i}$ have an uniform
bound, then it is shown in \cite{An2} that $(X,d_{X})$ is an orbifold with
finite singular points $\{p_{k}\}_{k=1}^{N}$. Furthermore, $d_{X}$ is
induced by a $C^{1,\alpha }$-metric $g_{\infty }$ on the regular part $%
X\backslash \{p_{k}\}_{k=1}^{N}$, and by passing to a subsequence, $g_{i}$ $%
C^{1,\alpha }$-converges to $g_{\infty }$ in the Cheeger-Gromov sense. If $%
g_{i}$ are Einstein metrics with bounded Einstein constants, then $g_{\infty
}$ is an Einstein metric and $g_{i}$ converge to $g_{\infty }$ in the $%
C^{\infty }$-topology (cf. \cite{An1}, \cite{BKN}, \cite{Ti}). In the
present paper, we prove an analogue convergence theorem for EYM systems.

Providing $(M,\mathcal{L},g,F,\lambda )$ is an Einstein Yang-Mills system,
for any $m\in \mathbb{N}^{+}$, $(M,\mathcal{L}^{m},m^{2}g,mF,\frac{\lambda }{%
m^{2}})$ also satisfies (\ref{eq:1}). By choosing an appropriate
$m$, we can normalize the  Einstein Yang-Mills system such that
 the
EYM constant $|\frac{\lambda }{%
m^{2}}|\leq 1$. Thus, we only consider EYM systems with EYM
constants belonging to  $[-1,1]$ in this paper.

\begin{theorem}
\label{thm:1} Let $\left\{ \left( M_{i},\mathcal{L}_{i},g_{i},F_{i},\lambda
_{i}\right) \right\} $ be EYM systems with EYM constants $\lambda _{i}\in %
\left[ -1,1\right] ,$ where $\left\{ M_{i}\right\} $ is a family of
connected closed n-manifolds. Assume that there are constants $\Omega >0$, $%
v>0$, $D>0$, $C_{0}>0$ and $c_{0}>0$ independent of $i$ such that

\begin{itemize}
\item[(i)] $\mathrm{Vol}_{M_{i}}\geq v>0,$ and $\mathrm{diam}_{M_{i}}\leq D,$

\item[(ii)]
\begin{equation*}
\int_{M_{i}}\left\vert F_{i}\right\vert ^{2}d\mu _{i}\leq \Omega ,
\end{equation*}

\item[(iii)] $b_{2}\left( M_{i}\right) \leq c_{0},\text{ for }n=4$, or
\begin{equation*}
\int_{M_{i}}\left\vert \mathrm{Rm}(g_{i})\right\vert ^{\frac{n}{2}}d\mu
_{i}\leq C_{0}\text{ for }n>4.
\end{equation*}
\end{itemize}

Then a subsequence of $(M_{i},g_{i})$ converges, without changing the
subscripts, in the Gromov-Hausdorff sense, to a connected Riemannian
orbifold $(M_{\infty },g_{\infty })$ with finite singular points $%
\{p_{k}\}_{k=1}^{N},$ each having a neighborhood homeomorphic to the cone $%
C\left( S^{n-1}/\Gamma _{k}\right) ,$ with $\Gamma _{k}$ a finite subgroup
of $O\left( n\right) .$ The metric $g_{\infty }$ is a $C^{0}$ Riemannian
orbifold metric on $M_{\infty },$ which is smooth off the singular points.
Furthermore, there is a $U(1)$-bundle $\mathcal{L}_{\infty }$ on the regular
part $M_{\infty }^{o}=M_{\infty }\backslash \{p_{k}\}_{k=1}^{N}$, a
Yang-Mills connection $A_{\infty }$ of $\mathcal{L}_{\infty }$ with
curvature $F_{\infty }$, and a constant $\lambda _{\infty }\in [-1,1]$ such
that $(M_{\infty }^{o},\mathcal{L}_{\infty },g_{\infty },F_{\infty },\lambda
_{\infty })$ is an EYM system. And, for any compact subset $K\subset \subset
M_{\infty }^{o}$, there are embeddings $\Phi _{K}^{i}:K\rightarrow M_{i}$
such that $\Phi _{K}^{i,-1}\mathcal{L}_{i}\cong \left. \mathcal{L}_{\infty
}\right\vert _{K}$ for $i\gg 1$,
\begin{equation*}
\Phi _{K}^{i,\ast }g_{i}\rightarrow g_{\infty },\ \ \ \Phi _{K}^{i,\ast
}F_{i}\rightarrow F_{\infty },\ \ \ \text{and}\ \ \ \lambda _{i}\rightarrow
\lambda _{\infty },
\end{equation*}%
when $i\rightarrow \infty $ in the $C^{\infty }$-sense.
\end{theorem}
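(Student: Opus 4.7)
The plan is to combine Anderson's orbifold compactness theorem (\cite{An1}, \cite{An2}) with elliptic regularity for Yang-Mills connections, exploiting that for $U(1)$ the bundle curvature $F_i$ is a globally defined closed $2$-form on $M_i$. The first step is to reduce to Anderson's hypotheses. Since $dF_i=0$ automatically and $d^*F_i=0$ by the Yang-Mills equation, $F_i$ is a harmonic $2$-form, so by the Weitzenb\"ock formula $\tfrac12\Delta|F_i|^2\geq -|\nabla F_i|^2-C(|\mathrm{Rm}|+|\mathrm{Ric}|)|F_i|^2$. Using (\ref{eq:1}) and (ii), a Moser iteration promotes the $L^2$-bound (ii) to a local $L^\infty$-bound on $|F_i|$, and then (\ref{eq:1}) itself yields a pointwise bound on $|\mathrm{Ric}(g_i)|$ since $|\eta_i|\leq|F_i|^2$. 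For $n>4$ the $L^{n/2}$-curvature bound is assumption (iii); for $n=4$ I would combine the Gauss--Bonnet--Chern and Hirzebruch signature formulas with $b_2(M_i)\leq c_0$ and the $L^2$-bound on $\mathrm{Ric}(g_i)$ just obtained to produce a uniform bound on $\int_{M_i}|\mathrm{Rm}(g_i)|^2\,d\mu_i$.

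With these uniform bounds and (i), Anderson's orbifold compactness theorem (\cite{An2}) provides a subsequence converging in Gromov--Hausdorff sense to a Riemannian orbifold $(M_\infty,g_\infty)$ with finitely many cone singularities $C(S^{n-1}/\Gamma_k)$, and $C^{1,\alpha}$-convergence of metrics on compact subsets of the regular part $M_\infty^o$. To upgrade this to smooth convergence of the coupled data, I would work in harmonic coordinates on small balls $U_\alpha\subset M_\infty^o$ and place each $A_i$ in Coulomb gauge $d^*A_i=0$ (for $U(1)$, simply Hodge gauge on the connection $1$-forms). In this gauge the equations (\ref{eq:1}) together with $d^*F_i=0$ form a quasilinear elliptic system in $(g_i,A_i)$ with H\"older coefficients, so standard bootstrapping yields smooth convergence on each $U_\alpha$. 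Pulling back the $U(1)$-valued transition functions $g^{\alpha\beta}_i$ between Coulomb trivializations and passing to a diagonal subsequence, the cocycle conditions survive in the limit and assemble a $U(1)$-bundle $\mathcal{L}_\infty\to M_\infty^o$ with connection $A_\infty$ whose curvature is $F_\infty$; passing to the limit in (\ref{eq:1}), together with $\lambda_i\to\lambda_\infty\in[-1,1]$, shows that $(M_\infty^o,\mathcal{L}_\infty,g_\infty,F_\infty,\lambda_\infty)$ is an EYM system.

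The hard part is the bubbling and removable-singularities analysis at each $p_k$. Curvature may concentrate there both metrically (producing Ricci-flat ALE bubbles, as in \cite{BKN}, \cite{An1}, \cite{Ti}) and in the connection (producing $L^2$ Yang-Mills instanton bubbles on these ALE spaces). I would need quantization statements showing that each such bubble carries a definite amount of $\int|\mathrm{Rm}|^{n/2}$ or $\int|F|^2$, so that only finitely many bubbles occur, each leaving behind a cone singularity of the claimed type, and that $F_\infty$ extends across $\{p_k\}$ with finite $L^2$-norm. This requires interleaving Uhlenbeck-type compactness for $U(1)$ connections with Anderson's bubble analysis for Ricci-bounded metrics, carefully handling the cross-term $\eta$ in (\ref{eq:1}) during rescaling (its scaling is compatible with that of $\mathrm{Ric}$ under the normalization $\mathcal{L}\mapsto\mathcal{L}^m$, $g\mapsto m^2g$, $F\mapsto mF$ used in the paper). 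This bubbling-and-extension step is the technical heart of the argument and is where the hypothesis (ii) plays its essential quantitative role.
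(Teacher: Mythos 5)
The decisive ingredient you are missing is the paper's Lemma \ref{lam:1}: for a $U(1)$ EYM system, the contracted second Bianchi identity combined with $d^{\ast}F=0$ and the Bianchi identity $dF=0$ forces both $R$ and $\left\vert F\right\vert ^{2}$ to be \emph{constant} on $M$. With (i) and (ii) this immediately gives the global pointwise bound $\left\vert F_{i}\right\vert ^{2}\leq \Omega /v$, hence a uniform two-sided bound on $\mathrm{Ric}(g_{i})$ everywhere (Lemma \ref{lem:3}), with no analysis at all. Your substitute --- Moser iteration on the Weitzenb\"{o}ck inequality --- has a genuine gap: the zeroth-order coefficient involves the full curvature $\mathrm{Rm}$, which is only controlled in $L^{n/2}$, the critical exponent for such an iteration, so at best you obtain $\varepsilon$-regularity, i.e.\ an $L^{\infty}$ bound on $\left\vert F_{i}\right\vert$ only on balls where $\int\left\vert \mathrm{Rm}\right\vert ^{n/2}$ is small. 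At the (possible) curvature concentration points you get nothing, hence no \emph{global} Ricci bound --- yet Anderson's orbifold compactness theorem, which you invoke next, requires $\left\vert \mathrm{Ric}\right\vert \leq \Lambda$ on all of $M_{i}$. (You also need uniform local Sobolev constants to run Moser at all; these do follow from $\mathrm{Ric}\geq \lambda g\geq -g$, since $\eta \geq 0$, together with non-collapsing, but you do not address this.) The same missed lemma inflates your final step: since $\left\vert F_{i}\right\vert$ is pointwise bounded by $(\Omega/v)^{1/2}$, there is no Yang-Mills energy concentration and no instanton bubbling to analyze; the singular-point analysis is purely metric and is quoted from Anderson's blow-up argument (\cite{An1}, \cite{An2}), and the theorem only asserts an EYM system on the regular part $M_{\infty}^{o}$, so no removable-singularity statement for $F_{\infty}$ across the $p_{k}$ is needed. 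As written, your proposal both lacks the observation that makes the first step work and leaves its self-declared ``technical heart'' as an unproven program.

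For the intermediate steps your route differs from the paper's but is workable in principle. The paper never gauge-fixes $A_{i}$: it bootstraps the gauge-invariant pair $(g_{i},F_{i})$, using $\Delta\left\vert F\right\vert ^{2}=0$ and the Bochner formula to get $\left\vert \nabla F\right\vert ^{2}\leq 2\left\vert F\right\vert ^{2}(1+\left\vert \mathrm{Rm}\right\vert)$, hence $W^{1,2p}$ bounds for $F_{i}$ (Proposition \ref{prop:1}), and then the elliptic system (\ref{eq:3})--(\ref{eq:4}) in Anderson's harmonic coordinates (Proposition \ref{prop:3}) to obtain $C^{l,\alpha}$ bounds and $C^{\infty}$ subconvergence on the good regions. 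The limit bundle is produced not by gluing Coulomb trivializations but by a Chern-class argument: $\left[ \frac{\sqrt{-1}}{2\pi}\Phi_{K}^{i,\ast}F_{i}\right]$ converges in the lattice $H^{2}_{F}(K;\mathbb{Z})$, hence is eventually constant along a subsequence, and the torsion part of $c_{1}$ takes only finitely many values, so $\Phi_{K}^{i,-1}\mathcal{L}_{i}$ is eventually a fixed bundle; your transition-function gluing would additionally require uniform control of the gauge transformations, which you do not supply.
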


\begin{re}
If we assume that $\lambda _{i }> \kappa >0$ for a uniform positive
constant $\kappa$, then the Ricci curvature of $g_{i}$ is bounded
below by $\kappa$, since $\eta$ is a non-negative symmetric tensor.
By Myers' Theorem, the diameters of $g_{i}$ are uniformly bounded
from above. Therefore, the condition of diameter bounds in Theorem
\ref{thm:1} can be removed.
\end{re}

\begin{re}
If $M_{i}$ in Theorem \ref{thm:1} are odd-dimensional oriented
manifolds, by Corollary 2.8 in \cite{An2}, there is no singular
points in $M_{\infty }$,
and the EYM systems $\left\{ \left( M_{i},\mathcal{L}_{i},g_{i},F_{i},%
\lambda _{i}\right) \right\} $ smoothly converges to a smooth EYM system $%
(M_{\infty },\mathcal{L}_{\infty },g_{\infty },F_{\infty },\lambda
_{\infty })$.
\end{re}

\begin{re}
If we replace the assumption $\mathrm{Vol}_{M_{i}}\geq v>0$ and $%
\int_{M_{i}}\left\vert \mathrm{Rm}\right\vert ^{\frac{n}{2}}d\mu
\leq C_{0}$ by
the condition of injective radius bounded from below, we can  obtain $%
C^{\infty }$ subconvergence of the sequence $\left\{ \left( M_{i},\mathcal{L}%
_{i},g_{i},F_{i},\lambda _{i}\right) \right\} $ to a smooth EYM system $%
(M_{\infty },\mathcal{L}_{\infty },g_{\infty },F_{\infty },\lambda _{\infty
})$, by Theorem 1.1 in \cite{An2} and the similar arguments in the proof of
Theorem \ref{thm:1}.
\end{re}

From Theorem 1, one can see that  orbifolds with orbifold metrics
solving Einstein Yang-Mills equations on regular parts appear
naturally as limits of sequences of EYM systems.  We would like to
construct such orbifolds which do not admit any Einstein orbifold
metrics. Firstly, let's recall an example of EYM systems from
\cite{St}. Let $g_{1}$ be the standard metric with Gaussian
curvature $1$ on $S^{2}$, $g_{2}$ be the standard metric with
Gaussian curvature $-1$ on a surface $H$ with genus $\mathfrak{g}$
bigger than 1, and $\omega _{1}$ (resp. $\omega _{2}$) be the volume
form of $g_{1}$ (resp. $g _{2}$). If $US^{2}$ and $UH$ denote the
unit tangent bundles of $S^{2}$ and $H$ respectively, then $F=\pi
_{1}^{\ast }\omega _{1}+\pi _{2}^{\ast }\omega _{2}$ is the
curvature of $\mathcal{L}=\pi _{1}^{-1}US^{2}\otimes \pi
_{2}^{-1}UH$ on $M=S^{2}\times H$, where $\pi _{1} $ and $\pi _{2}$
are standard projections from $M$ to $S^{2}$ and $H$. For any
$\lambda <0$, the Riemannian metric $g=A_{\lambda }\pi _{1}^{\ast
}g_{1}+B_{\lambda }\pi _{2}^{\ast }g_{2}$ and $F$ solve the Einstein
Yang-Mills equations (\ref{eq:1}), i.e. $(M,\mathcal{L},g,F,\lambda
)$ is an EYM system, where $A_{\lambda }=\frac{1-\sqrt{1-2\lambda
}}{2\lambda }$ and $B_{\lambda }=\frac{-1-\sqrt{1-2\lambda
}}{2\lambda }$. Now we assume that $H$ is a
hyperelliptic Riemann surface, i.e. $H$ admits a conformal involution with $2%
\mathfrak{g}+2$ fixed points. Consider the involution $\iota $ of
$M$  obtained as the product of a $180^{\circ }$ rotation of $S^{2}$
around an
axis and the hyperelliptic involution of $H$. It is clear that the $\mathbb{Z%
}_{2}$-action induced by the involution preserves $\mathcal{L}$, $g$ and $F$%
, i.e. $\iota ^{-1}\mathcal{L}\cong \mathcal{L}$, $\iota ^{\ast }g=g$ and $%
\iota ^{\ast }F=F$. Thus the orbifold $M/\mathbb{Z}_{2}$ has $4\mathfrak{g}%
+4 $ singular points, $g$ induces an orbifold Riemannian metric on $M/%
\mathbb{Z}_{2}$, and $(\mathcal{L},g,F,\lambda )$ induces an EYM system on
the regular part of $M/\mathbb{Z}_{2}$. We claim that $M/\mathbb{Z}_{2}$
wouldn't admit any orbifold Einstein metric. Note that the orbifold Euler
characteristic $\chi_{orb} (M/\mathbb{Z}_{2})=\frac{1}{2}\chi (M)=2-2%
\mathfrak{g}<0$. However, if there is an orbifold Einstein metric $g^{\prime
}$ on $M/\mathbb{Z}_{2}$, then
\begin{equation*}
\chi_{orb} (M/\mathbb{Z}_{2})=\frac{1}{8\pi ^{2}}\int_{M/\mathbb{Z}_{2}}|%
\mathrm{Rm}(g^{\prime })|^{2}d\mu \geq 0,
\end{equation*}%
by (6.2) in \cite{An1}, which is a contradiction.

In Section $2,$ some properties of EYM systems are studied, and, in Section $%
3$ Theorem 1 is proved.

\section{Preliminary properties of EYM systems}

To start things off we present some basic preliminary properties of
EYM systems which are  used in the proof of  Theorem 1. Firstly, we
have the following lemma for an EYM system.

\begin{lm}
\label{lam:1} If $(M^{n},\mathcal{L},g,F,\lambda )$ is an EYM system, then
the scalar curvature $R$ of $g$ and $\left\vert F\right\vert ^{2}$ are
constants.
\end{lm}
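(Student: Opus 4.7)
The plan is to exploit two different consequences of the Einstein equation $\mathrm{Ric}-\frac{1}{2}\eta=\lambda g$: its trace and its divergence. Taking the trace first, observe that $g^{ij}\eta_{ij}=g^{ij}g^{kl}F_{ik}F_{jl}=F_{ab}F^{ab}=2|F|^{2}$, so tracing the Einstein equation yields the pointwise identity $R = n\lambda + |F|^{2}$. In particular $dR = d|F|^{2}$, so showing that $R$ is constant will automatically force $|F|^{2}$ to be constant too (and conversely).

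Next I would take the divergence of the Einstein equation. The right-hand side $\lambda g_{ij}$ is divergence-free (recall $\lambda$ is a constant), and the contracted second Bianchi identity gives $\nabla^{i}R_{ij} = \tfrac{1}{2}\nabla_{j}R$. Hence
\begin{equation*}
\tfrac{1}{2}\nabla_{j}R \;=\; \tfrac{1}{2}\nabla^{i}\eta_{ij},
\qquad\text{i.e.}\qquad
\nabla_{j}R = \nabla^{i}\eta_{ij}.
\end{equation*}
So the whole problem reduces to computing $\nabla^{i}\eta_{ij}$.

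This is the only step that requires any real calculation, and it is where the Yang--Mills equation and the Bianchi identity for $F$ both enter. Writing $\eta_{ij}=F_{ik}F_{j}{}^{k}$ and differentiating, the term $(\nabla^{i}F_{ik})F_{j}{}^{k}$ vanishes because $d^{\ast}F=0$. The remaining term $F^{ik}\nabla_{i}F_{jk}$ is rewritten by applying the first Bianchi identity $\nabla_{i}F_{jk}+\nabla_{j}F_{ki}+\nabla_{k}F_{ij}=0$ (which holds because $F$ is a curvature, hence closed) and then using antisymmetry of $F$ to relabel; a standard one-line manipulation gives $F^{ik}\nabla_{i}F_{jk}=\tfrac{1}{2}F^{ik}\nabla_{j}F_{ik}=\tfrac{1}{4}\nabla_{j}(F_{ik}F^{ik})=\tfrac{1}{2}\nabla_{j}|F|^{2}$. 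Thus $\nabla^{i}\eta_{ij}=\tfrac{1}{2}\nabla_{j}|F|^{2}$.

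Substituting back we obtain $dR=\tfrac{1}{2}d|F|^{2}$. Combining this with the trace identity $dR=d|F|^{2}$ forces $dR=0$, so $R$ is constant, and then $|F|^{2}=R-n\lambda$ is constant as well. The main (only) obstacle is the index bookkeeping in the computation of $\nabla^{i}\eta_{ij}$; everything else is a direct consequence of the EYM equations together with the two Bianchi identities.
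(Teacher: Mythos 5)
Your proof is correct and follows essentially the same route as the paper: trace the Einstein equation, apply the contracted second Bianchi identity to its divergence, and compute $\nabla^{i}\eta_{ij}$ using $d^{\ast}F=0$ together with the Bianchi identity for $F$. The only difference is a harmless normalization convention for $\vert F\vert^{2}$ (your $\vert F\vert^{2}=\tfrac{1}{2}F_{ab}F^{ab}$ versus the paper's full contraction), which shifts the constants but leads to the same cancellation and conclusion.
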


\begin{proof}
By taking trace of the first equation of (\ref{eq:1}), we have%
\begin{equation*}
R-\frac{1}{2}\left\vert F\right\vert ^{2}=n\lambda .
\end{equation*}%
Hence
\begin{eqnarray*}
0 &=&\nabla _{k}\left( R-\frac{1}{2}\left\vert F\right\vert ^{2}\right) \\
&=&2g^{ij}\nabla _{i}R_{jk}-\frac{1}{2}\nabla _{k}\left\vert F\right\vert
^{2} \\
&=&2g^{ij}\nabla _{i}\left( \lambda g_{jk}+\frac{1}{2}\eta _{jk}\right) -%
\frac{1}{2}\nabla _{k}\left\vert F\right\vert ^{2} \\
&=&g^{ij}\nabla _{i}\eta _{jk}-\frac{1}{2}\nabla _{k}\left\vert
F\right\vert ^{2}\text{ }.
\end{eqnarray*}%
And on the other hand,%
\begin{eqnarray*}
g^{ij}\nabla _{i}\eta _{jk} &=&g^{ij}\nabla _{i}\left(
g^{pq}F_{jp}F_{kq}\right) \\
&=&g^{ij}g^{pq}F_{jp}\nabla _{i}F_{kq}+g^{ij}g^{pq}F_{kq}\nabla _{i}F_{jp} \\
&=&g^{ij}g^{pq}F_{jp}\nabla _{i}F_{kq}-g^{pq}F_{kq}d^{\ast }F_{p} \\
&=&-g^{ij}g^{pq}F_{jp}\left( \nabla _{k}F_{qi}+\nabla _{q}F_{ik}\right) \\
&=&\frac{1}{4}\nabla _{k}\left\vert F\right\vert ^{2}\text{ },
\end{eqnarray*}%
where we have used the Yang-Mills equation of the EYM system (\ref{eq:1})
and the Bianchi identity. Thus we have $\nabla _{k}\left\vert F\right\vert
^{2}=0,$ and then $\nabla _{k}R=0$. We obtain the conclusion, i.e. $R$ and $%
\left\vert F\right\vert ^{2}$ are constants.
\end{proof}

As a consequence of Lemma \ref{lam:1}, the  $W^{1,2p}$ esitmate of
the bundle curvature $F$ of an EYM system is easily obtained.

\begin{prop}
\label{prop:1} Let $(M,\mathcal{L},g,F,\lambda )$ be an EYM system
with $|\lambda|\leq 1$. If the volume of the underlying $n$-manifold
$M$ is bounded from above by $V$, then for any $1<$ $p<+\infty ,$
\begin{equation*}
\left( \int_{M}\left\vert \nabla F\right\vert ^{2p}d\mu \right) ^{\frac{1}{p}%
}\leq 2\max \{1,2V^{\frac{1}{p}}\}\left\vert F\right\vert ^{2}
\left( 1+\left( \int_{M}\left\vert \mathrm{Rm}\right\vert ^{p}d\mu
\right) ^{\frac{1}{p}}\right) .
\end{equation*}%
 Moreover, there exists a
constant $C=C(p, V,\left\vert F\right\vert ^{2})>0$ such that
$$\left\| F\right\| _{W^{1,2p}}^{2}\leq C\left( 1+\left\|
\mathrm{Rm}\right\|_{L^{p}}\right) .$$
\end{prop}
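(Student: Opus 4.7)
The strategy is to derive both estimates as consequences of the Bochner--Weitzenb\"ock formula for harmonic two-forms, with Lemma 2.1 playing the decisive role of turning a second-order identity into a pointwise first-order bound on $|\nabla F|$. Since $\mathcal{L}$ is abelian, the Bianchi identity reduces to $dF=0$, and the Yang--Mills equation is exactly $d^{\ast}F=0$; hence $F$ is a harmonic two-form. The standard Weitzenb\"ock identity on two-forms then gives
\[
\nabla^{\ast}\nabla F = -\mathcal{R}(F),
\]
where $\mathcal{R}$ is a linear operator on two-forms built algebraically from $\mathrm{Ric}$ and $\mathrm{Rm}$ and satisfies $|\mathcal{R}(F)|\le c_{n}(|\mathrm{Ric}|+|\mathrm{Rm}|)|F|$ for some dimensional constant $c_{n}$.

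Since $|F|^{2}$ is constant by Lemma 2.1, expanding $0=\tfrac{1}{2}\Delta |F|^{2}$ via the product rule and inserting the Weitzenb\"ock identity yields the pointwise inequality
\[
|\nabla F|^{2} = \langle \nabla^{\ast}\nabla F, F\rangle \le |\nabla^{\ast}\nabla F|\,|F| \le c_{n}\bigl(|\mathrm{Ric}|+|\mathrm{Rm}|\bigr)|F|^{2}.
\]
Taking the trace of the first EYM equation and using $|\eta|\le |F|^{2}$ together with $|\lambda|\le 1$ gives $|\mathrm{Ric}|\le \sqrt{n}+\tfrac{1}{2}|F|^{2}$, so the pointwise bound collapses to
\[
|\nabla F|^{2}\le C(n,|F|^{2})\,(1+|\mathrm{Rm}|).
\]

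The first inequality of the proposition now follows by raising this to the $p$-th power, integrating, and applying the elementary bounds $(1+x)^{p}\le 2^{p}(1+x^{p})$ and $(a+b)^{1/p}\le a^{1/p}+b^{1/p}$; the stated constant $2\max\{1,2V^{1/p}\}$ emerges from grouping the volume contribution $V^{1/p}$ with the $\|\mathrm{Rm}\|_{L^{p}}$ term. For the $W^{1,2p}$ estimate, the constancy of $|F|^{2}$ gives $\|F\|_{L^{2p}}^{2}=|F|^{2}V^{1/p}$ trivially, and summing this with the $\|\nabla F\|_{L^{2p}}^{2}$ bound produces the desired inequality with $C=C(p,V,|F|^{2})$.

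I do not anticipate any serious obstacle: the proof reduces to a clean application of the Weitzenb\"ock identity once Lemma 2.1 is in hand. The only conceptual point is the observation that constancy of $|F|^{2}$ collapses the second-order term $\nabla^{\ast}\nabla F$ into a pointwise estimate on $|\nabla F|^{2}$, bypassing any need for elliptic regularity or bootstrap arguments; the remainder is just H\"older-type integration and bookkeeping of constants.
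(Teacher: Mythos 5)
Your proposal follows essentially the same route as the paper: $F$ is a harmonic two-form, the Weitzenb\"ock formula combined with the constancy of $|F|^{2}$ from Lemma \ref{lam:1} turns $0=\tfrac12\Delta|F|^{2}$ into a pointwise bound $|\nabla F|^{2}\leq C(1+|\mathrm{Rm}|)$, the Ricci term is controlled by substituting the EYM equation, and the $L^{p}$ integration and the trivial $L^{2p}$ bound on the constant $|F|$ finish the argument. The one quantitative difference is that the paper evaluates the Weitzenb\"ock curvature term exactly as $2R_{ijkl}F_{jk}F_{il}-2\langle\mathrm{Ric},\eta\rangle$ and discards $-\langle\eta,\eta\rangle\leq 0$ to reach the sharp pointwise bound $|\nabla F|^{2}\leq 2|F|^{2}(1+|\mathrm{Rm}|)$, whereas your generic estimate $|\mathcal{R}(F)|\leq c_{n}(|\mathrm{Ric}|+|\mathrm{Rm}|)|F|$ introduces a dimensional constant and so recovers the first displayed inequality only up to a constant $C(n,|F|^{2})$ rather than with the explicit factor $2\max\{1,2V^{1/p}\}|F|^{2}$; the qualitative $W^{1,2p}$ conclusion is unaffected.
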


\begin{proof}
We know that $\left\vert F\right\vert ^{2}$ is a constant due to Lemma \ref%
{lam:1}. Hence
\begin{eqnarray*}
0 &=&\Delta \left\vert F\right\vert ^{2}=2\left\langle \Delta
F,F\right\rangle +2\left\vert \nabla F\right\vert ^{2} \\
&=&2\left\langle \Delta _{d}F,F\right\rangle -2\left(
2R_{ijkl}F_{jk}F_{il}-R_{ik}F_{kj}F_{ij}-R_{jk}F_{ik}F_{ij}\right)
+2\left\vert \nabla F\right\vert ^{2} \\
&=&-4R_{ijkl}F_{jk}F_{il}+4\left\langle \mathrm{Ric},\eta \right\rangle
+2\left\vert \nabla F\right\vert ^{2}.
\end{eqnarray*}%
It follows that%
\begin{eqnarray*}
\left\vert \nabla F\right\vert ^{2} &=&2\left\vert F\right\vert
^{2}\left\vert \mathrm{Rm}\right\vert -2\left\langle \mathrm{Ric},\eta
\right\rangle \\
&=&2\left\vert F\right\vert ^{2}\left\vert \mathrm{Rm}\right\vert
-2\left\langle \frac{1}{2}\eta +\lambda g,\eta \right\rangle \\
&\leq &2\left\vert F\right\vert ^{2}\left\vert \mathrm{Rm}\right\vert
-2\lambda \left\vert F\right\vert ^{2} \\
&\leq &2 \left\vert F\right\vert ^{2}\left( 1+\left\vert
\mathrm{Rm}\right\vert \right) ,
\end{eqnarray*}%
by $|\lambda|\leq 1.$  \ So%
\begin{eqnarray*}
\left( \int_{M}\left\vert \nabla F\right\vert ^{2p}d\mu \right) ^{\frac{1}{p}%
} &\leq &\left( \int_{M}\left( 2\left\vert F\right\vert ^{2}\left(
1+\left\vert \mathrm{Rm}\right\vert \right) \right) ^{p}d\mu \right) ^{\frac{%
1}{p}} \\
&\leq &2\left\vert F\right\vert ^{2}\left[ \left( \int_{\left\vert
\mathrm{Rm}\right\vert \leq 1}2^{p}d\mu \right)
^{\frac{1}{p}}+\left( \int_{\left\vert \mathrm{Rm}\right\vert
>1}\left\vert \mathrm{Rm}\right\vert
^{p}d\mu \right) ^{\frac{1}{p}}\right] \\
&\leq &2\left\vert F\right\vert ^{2}\left[ \left( \int_{M}2^{p}d\mu
\right) ^{\frac{1}{p}}+\left( \int_{M}\left\vert
\mathrm{Rm}\right\vert
^{p}d\mu \right) ^{\frac{1}{p}}\right] \\
&\leq &2\max \{1,2V^{\frac{1}{p}}\}\left\vert F\right\vert
^{2}\left(
1+\left( \int_{M}\left\vert \mathrm{Rm}\right\vert ^{p}d\mu \right) ^{\frac{1%
}{p}}\right) .
\end{eqnarray*}%
Combining with the fact that $\left\vert F\right\vert ^{2}$ is a
constant, we have $$\left\| F\right\| _{W^{1,2p}}^{2}\leq C\left(
1+\left\| \mathrm{Rm}\right\|_{L^{p}}\right) .$$
\end{proof}

After the classical Cheeger-Gromov's theorem, many works have been
done in the convergence of families of manifolds, and there are also
many applications of these convergence results in solving geometric
problems (cf. \cite{An3}, \cite{Go} and \cite{GW} etc.). A key point
 in  the proofs of these convergence results is to use the harmonic
coordinates, that is, the corresponding coordinate functions are
harmonic functions. One reason to use harmonic coordinates is that
Ricci tensors are elliptic operators of metric tensors under such
coordinates.

Given tensors $\xi $ and $\zeta ,$ $\xi \ast \zeta $ denotes some linear
combination of contractions of $\xi \otimes \zeta $ in this paper.

\begin{lm}
\label{lam:2} Let $(M,\mathcal{L},g,F,\lambda )$ be an EYM system and $%
u\colon U\rightarrow R^{n}$ be a harmonic coordinate system of the
underlying manifold $M.$ Then in this coordinate, the EYM equations (\ref%
{eq:1}) are %
\begin{equation}
-\frac{1}{2}g^{kl}\frac{\partial ^{2}g_{ij}}{\partial u^{k}\partial u^{l}}%
-Q_{ij}\left( g,\partial g\right)
-\frac{1}{2}g^{kl}F_{ik}F_{jl}-\lambda g_{ij}=0,  \tag{3}
\label{eq:3}
\end{equation}%
\begin{equation}
g^{kl}\frac{\partial ^{2}F_{ij}}{\partial u^{k}\partial u^{l}}%
+P_{ij}(g,\partial g,\partial F)+T_{ij}(g,\partial g,F)=0,  \tag{4}
\label{eq:4}
\end{equation}%
where
\begin{equation*}
Q\left( g,\partial g\right) =\left( g^{-1}\right) ^{\ast 2}\ast \left(
\partial g\right) ^{\ast 2},
\end{equation*}%
\begin{equation*}
P(g,\partial g,\partial F)=\left( g^{-1}\right) ^{\ast 2}\ast \partial g\ast
\partial F,
\end{equation*}%
and%
\begin{equation*}
T(g,\partial g,\partial ^{2}g,F)=\left( g^{-1}\right) ^{\ast 3}\ast \left(
\partial g\right) ^{\ast 2}\ast F+\left( g^{-2}\right) ^{\ast 2}\ast
\partial ^{2}g\ast F.
\end{equation*}
\end{lm}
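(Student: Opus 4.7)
The plan is to handle the two equations separately, treating the Einstein equation by the standard harmonic-coordinate identity for the Ricci tensor and the Yang–Mills equation by combining $d^{\ast}F=0$ with the Bianchi identity $dF=0$ so as to upgrade a first-order system to a second-order elliptic equation via the Weitzenböck formula.

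For equation (\ref{eq:3}): I would invoke the well-known identity that, in harmonic coordinates $u\colon U\to\RR^{n}$,
\begin{equation*}
\Ric_{ij}=-\tfrac{1}{2}g^{kl}\tfrac{\partial^{2}g_{ij}}{\partial u^{k}\partial u^{l}}+Q_{ij}(g,\partial g),
\end{equation*}
where $Q(g,\partial g)=(g^{-1})^{\ast 2}\ast(\partial g)^{\ast 2}$. (The harmonic condition kills exactly the non-elliptic first-order terms $g_{ij,k}\Gamma^{k}$ that appear in the general coordinate expression for $\Ric$; I would just cite this rather than rederive it.) Substituting into the first equation of (\ref{eq:1}) and moving $\frac{1}{2}\eta_{ij}=\frac{1}{2}g^{kl}F_{ik}F_{jl}$ and $\lambda g_{ij}$ to the left-hand side immediately yields (\ref{eq:3}).

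For equation (\ref{eq:4}): Since $A$ is a $U(1)$-connection, $F=dA$ satisfies the Bianchi identity $dF=0$, while the Yang–Mills equation gives $d^{\ast}F=0$. Hence $F$ is harmonic in the Hodge sense, and the Weitzenböck (or Bochner) formula for 2-forms gives
\begin{equation*}
\nabla^{\ast}\nabla F_{ij}=-(\Ric\ast F+\R\ast F)_{ij}=(\Rm\ast F)_{ij},
\end{equation*}
i.e.\ some universal contraction of curvature with $F$. Now I expand the rough Laplacian $\nabla^{\ast}\nabla$ in the harmonic coordinate system: the leading symbol is $-g^{kl}\partial_{k}\partial_{l}F_{ij}$, and the remaining terms come from Christoffel symbols and their derivatives acting on $F$. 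Schematically $\Gamma=(g^{-1})\ast\partial g$, so the terms linear in $\partial F$ collect to $(g^{-1})^{\ast 2}\ast\partial g\ast\partial F=P(g,\partial g,\partial F)$, while the terms involving only $F$ split into two types: quadratic-in-$\Gamma$ contributions, which give $(g^{-1})^{\ast 3}\ast(\partial g)^{\ast 2}\ast F$, and the curvature contribution $\Rm\ast F$, which in harmonic coordinates is of the form $(g^{-1})^{\ast 2}\ast\partial^{2}g\ast F+(g^{-1})^{\ast 3}\ast(\partial g)^{\ast 2}\ast F$. Collecting, these last terms are exactly $T_{ij}(g,\partial g,\partial^{2}g,F)$, and (\ref{eq:4}) follows after rearrangement.

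The only mildly delicate step is the very last one: verifying that the Weitzenböck curvature term, expanded in harmonic coordinates, does not produce anything worse than the two tensor types allowed in the definition of $T$. This is safe because $\Rm$ involves at most one factor of $\partial^{2}g$ (plus quadratic-in-$\partial g$ corrections) with $g^{-1}$ raisings, and $F$ enters linearly; since $\ast$ denotes an unspecified linear combination of contractions, the precise coefficients and signs are immaterial. No separate analysis of $|F|^{2}$ or $R$ being constant (Lemma \ref{lam:1}) is needed here, since the identities $d^{\ast}F=0$ and $dF=0$ are used directly.
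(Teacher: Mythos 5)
Your proposal is correct and follows essentially the same route as the paper: cite the standard harmonic-coordinate expression for the Ricci tensor to get (\ref{eq:3}), then use $dF=0$ together with $d^{\ast}F=0$ to conclude $\Delta_{d}F=0$, apply the Weitzenb\"ock formula for $2$-forms, and expand the resulting Laplacian in harmonic coordinates to collect the lower-order terms into $P$ and $T$. The minor sign and bookkeeping differences (e.g.\ $+Q$ versus $-Q$, expanding $\nabla^{\ast}\nabla$ versus expanding $\Delta_{d}$ directly) are immaterial since $\ast$ denotes an unspecified linear combination of contractions.
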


\begin{proof}
The Ricci tensor under the harmonic coordinate system is given by (cf. \cite%
{Pe})%
\begin{equation*}
R_{ij}=-\frac{1}{2}g^{kl}\frac{\partial ^{2}g_{ij}}{\partial u^{k}\partial
u^{l}}-Q_{ij}\left( g,\partial g\right)
\end{equation*}%
with
\begin{equation*}
Q_{ij}\left( g,\partial g\right) =\left( g^{-1}\right) ^{\ast 2}\ast \left(
\partial g\right) ^{\ast 2}.
\end{equation*}%
The first equation of EYM equations becomes%
\begin{equation*}
-\frac{1}{2}g^{kl}\frac{\partial ^{2}g_{ij}}{\partial u^{k}\partial u^{l}}%
-Q_{ij}\left( g,\partial g\right) -\frac{1}{2}g^{kl}F_{ik}F_{jl}-\lambda
g_{ij}=0.
\end{equation*}%
Note that the Yang-Mills equation $d^{\ast }F=0$ is equivalent to\text{ }$%
\Delta _{d}F=0$ on compact manifolds, i.e. $F$ is harmonic. We use the
Bochner formula of $2$-forms,%
\begin{equation*}
\left( \Delta _{d}F\right) _{ij}=\left( \Delta F\right)
_{ij}+2g^{jp}g^{kq}R_{ijkl}F_{pq}-g^{kl}R_{ik}F_{lj}-g^{kl}R_{jk}F_{il}.
\end{equation*}%
 Observe that $%
\Delta u^{k}=0,$ which is equivalent to $g^{ij}\Gamma _{ij}^{k}=0$ for all $%
k=1,\cdots ,n.$ Then  in this coordinate chart,
\begin{eqnarray*}
\left( \Delta _{d}F\right) _{ij} &=&g^{kl}\frac{\partial ^{2}F_{ij}}{%
\partial u^{k}\partial u^{l}}+g^{kp}g^{lq}R_{ijkl}F_{pq} \\
&&-2g^{pq}\{\Gamma _{pi}^{l}\frac{\partial F_{lj}}{\partial u^{q}}+\Gamma
_{pj}^{l}\frac{\partial F_{il}}{\partial u^{q}}\}+\frac{\partial g^{pq}}{%
\partial u^{i}}\Gamma _{pq}^{l}F_{lj}+\frac{\partial g^{pq}}{\partial u^{j}}%
\Gamma _{pq}^{l}F_{il} \\
&&+2g^{pq}\{\Gamma _{iq}^{m}\Gamma _{mp}^{l}F_{lj}+\Gamma _{jq}^{m}\Gamma
_{mp}^{l}F_{il}+\Gamma _{jq}^{m}\Gamma _{ip}^{l}F_{lm}\}.
\end{eqnarray*}%
And the Yang-Mills equation becomes%
\begin{equation*}
g^{kl}\frac{\partial ^{2}F_{ij}}{\partial u^{k}\partial u^{l}}%
+P_{ij}(g,\partial g,\partial F)+T_{ij}(g,\partial g,\partial ^{2}g,F)=0,
\end{equation*}%
where
\begin{equation*}
P_{ij}(g,\partial g,\partial F)=\left( g^{-1}\right) ^{\ast 2}\ast \partial
g\ast \partial F, \ \ \mathrm{and}
\end{equation*}
\begin{equation*}
\ \ T_{ij}(g,\partial g,\partial ^{2}g,F)=\left( g^{-1}\right) ^{\ast 3}\ast
\left( \partial g\right) ^{\ast 2}\ast F+\left( g^{-2}\right) ^{\ast 2}\ast
\partial ^{2}g\ast F.
\end{equation*}
\end{proof}

\section{Convergence of EYM metrics}

This section is devoted to the proof of Theorem 1. We begin with a review of
 harmonic coordinates which play essential roles in the
Cheeger-Gromov convergence of Riemannian manifolds.

A compact Riemannian $n$-manifold $(M,g)$ is said to have an
$(r,\sigma ,C^{l,\alpha })$ for $0<\alpha <1$ (resp. $(r,\sigma
,W^{l,p })$ for $1<p<\infty$) adapted harmonic atlas, if there is a
covering $\left\{
B_{x_{k}}\left( r\right) \right\} _{k=1}^{\sigma }$ of $M$ by geodesic $r$%
-balls, for which the balls $B_{x_{k}}\left( \frac{r}{2}\right) $ also cover
$M$ and the balls $B_{x_{k}}\left( \frac{r}{4}\right) $ are disjoint, such
that each $B_{x_{k}}\left( 10r\right) $ has a harmonic coordinate chart $%
u\colon B_{x_{k}}\left( 10r\right) \rightarrow U\subset
\mathbb{R}
^{n},$ and the metric tensor in these coordinates are $C^{l,\alpha
}$  (resp. $W^{l,p}$)
bounded, i.e. if $g_{ij}=g\left( \frac{\partial }{\partial u_{i}},\frac{%
\partial }{\partial u_{j}}\right) $ on $B_{x_{k}}\left( 10r\right) ,$ then%
\begin{equation*}
C^{-1}\delta _{ij}\leq g_{ij}\leq C\delta _{ij},
\end{equation*}%
and%
\begin{equation*}
\left\Vert g_{ij}\right\Vert _{C^{l,\alpha }}\leq C, \ \ \ \ ({\rm
resp. } \ \ \  \left\Vert g_{ij}\right\Vert _{W^{l,p}}\leq C,)
\end{equation*}%
for some constant $C>1,$ where the norms are taken with respect to the
coordinates $\left\{ u^{j}\right\} $ on $B_{x_{k}}\left( 10r\right) .$

The following is a version of the local Cheeger-Gromov compactness theorem
(cf. Theorem 2.2 in \cite{An1}, Lemma 2.1 in \cite{An2} and \cite{GW}).

\begin{prop}
\label{prop:2}\bigskip Let $V_{i}$ be a sequence of domains in closed $%
C^{\infty }$ Riemannian manifolds $(M_{i},g_{i})$ such that $V_{i}$
admits an adapted harmonic atlas $\left( \delta ,\sigma ,C^{l,\alpha
}\right) $ for a constant $C>1$. Then there is a subsequence which
converges uniformly on compact subsets in
the $C^{l,\alpha^{\prime }}$ topology, $\alpha^{\prime }<\alpha$, to a $%
C^{l,\alpha }$ Riemannian manifold $V_{\infty }.$
\end{prop}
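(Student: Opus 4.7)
The plan is to execute a standard chart-by-chart Arzelà--Ascoli extraction and then glue the limits using the harmonic transition functions. Here is the outline in more detail.

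First, I would exploit the uniform finiteness of the atlas. Since each $V_i$ is covered by at most $\sigma$ harmonic balls, after passing to a subsequence I may assume that the number of charts is a fixed integer $\sigma_0 \leq \sigma$, with centers $x_{k}^{i}$ for $k = 1, \dots, \sigma_0$. There are only finitely many possible incidence patterns among the balls $B_{x_k^i}(10\delta)$, so a further subsequence makes the index set $I = \{(j,k) : B_{x_j^i}(10\delta) \cap B_{x_k^i}(10\delta) \neq \emptyset\}$ independent of $i$. The uniform comparability $C^{-1}\delta_{ab} \leq g_{ab} \leq C\delta_{ab}$ forces the image domains $U_k^i = u_k^i(B_{x_k^i}(10\delta)) \subset \mathbb{R}^n$ to contain and be contained in concentric Euclidean balls of fixed radii; after translation one may pass to a subsequence so that $U_k^i$ exhausts a fixed open set $U_k^\infty \subset \mathbb{R}^n$.

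Second, on each $U_k^i$ the metric coefficients $g_{ab,k}^i$ are uniformly bounded in $C^{l,\alpha}$. Since the embedding $C^{l,\alpha}(K) \hookrightarrow C^{l,\alpha'}(K)$ is compact for $\alpha' < \alpha$ on any compact $K \Subset U_k^\infty$, Arzelà--Ascoli together with a diagonal argument over an exhaustion of $U_k^\infty$ yields a subsequence along which $g_{ab,k}^i \to g_{ab,k}^\infty$ in $C^{l,\alpha'}_{\mathrm{loc}}$, with the limit still satisfying $C^{-1}\delta_{ab} \leq g_{ab,k}^\infty \leq C\delta_{ab}$ and $\|g_{ab,k}^\infty\|_{C^{l,\alpha}} \leq C$. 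Doing this for each $k = 1, \dots, \sigma_0$ in turn and diagonalizing produces a simultaneous limit for all charts.

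Third, for $(j,k) \in I$, the transition map $\phi_{jk}^i = u_j^i \circ (u_k^i)^{-1}$ has components that are harmonic in the $u_k^i$-coordinates (since the $u_j^i$ are harmonic functions on $M_i$), so each component satisfies a linear elliptic equation $g_i^{ab}\partial_a\partial_b \phi = 0$ with $C^{l-1,\alpha}$-bounded coefficients. Elliptic Schauder estimates upgrade the obvious $C^0$ bound on $\phi_{jk}^i$ (coming from the two-sided metric bound) to a uniform $C^{l+1,\alpha}$ bound on any compact subset of its domain. Another Arzelà--Ascoli step extracts $\phi_{jk}^i \to \phi_{jk}^\infty$ in $C^{l+1,\alpha'}_{\mathrm{loc}}$, and the limits inherit the cocycle and compatibility conditions $\phi_{jk}^\infty = (\phi_{kj}^\infty)^{-1}$ and $(u_j^i)^*g_i = (\phi_{jk}^i)^*(u_k^i)^*g_i$ by passing to the limit.

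Finally, I would assemble $V_\infty$ by gluing the $U_k^\infty$ via the $\phi_{jk}^\infty$; this is a $C^{l+1,\alpha'}$ manifold carrying a $C^{l,\alpha}$ Riemannian metric $g_\infty$ defined by the $g_{ab,k}^\infty$. For any compact $K \Subset V_\infty$, finitely many of the limit charts suffice to cover $K$, and for $i$ large enough the corresponding patches of $V_i$ cover the image of $K$ under the natural embedding defined by the chart identifications; the convergence $g_i \to g_\infty$ in $C^{l,\alpha'}$ on $K$ then follows from the chart-wise convergence. The main obstacle, and the place most care is required, is the gluing step: verifying that the limit transition functions form a genuine $C^{l+1,\alpha'}$ cocycle and that the resulting manifold is Hausdorff and independent of the extraction. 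Both points are handled by the uniform noncollapsing built into the adapted atlas (definite lower bound on chart size and finite overlap count), which ensures the incidence combinatorics stabilizes and the limit patches overlap consistently.
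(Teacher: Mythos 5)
The paper gives no proof of this proposition; it simply quotes it as a known local version of the Cheeger--Gromov compactness theorem, citing Anderson and Greene--Wu. Your chart-by-chart Arzel\`a--Ascoli extraction, with the harmonicity of the transition maps $u_j\circ u_k^{-1}$ supplying the extra derivative needed for a consistent $C^{l+1,\alpha'}$ gluing, is precisely the standard argument in those references, so your proposal is correct and takes essentially the same approach as the paper's source.
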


In \cite{An2}, Anderson shows a local existence of an adapted atlas.

\begin{prop}
\label{prop:3}\textrm{(Proposition 2.5, Lemma 2.2 and Remarks 2.3 in \cite%
{An2})} Let $(M,g)$ be a Riemannian manifold with
\begin{equation*}
\left\vert \mathrm{Ric}_{M}\right\vert \leq \Lambda ,\ \ \mathrm{diam}%
_{M}\leq D\ \ and\ \ \mathrm{Vol}_{B\left( r\right) }\geq v_{0}>0
\end{equation*}%
for a ball $B\left( r\right) $.  Then, for any $C>1$, there exist
positive
constants $ \sigma=\sigma \left( \Lambda ,v_{0},n,D \right)$, $\epsilon =\epsilon \left( \Lambda ,v_{0},n,C\right) $ and $%
\delta =\delta \left( \Lambda ,v_{0},n,C\right) ,$ such that for any $%
1<p<\infty ,$ one can obtain a $\left( \delta ,\sigma ,W^{2,p}\right) $
adapted atlas on the union $U$ of those balls $B\left( r\right) $ satisfying
\begin{equation*}
\int_{B\left( 4r\right) }\left\vert \mathrm{Rm}\right\vert ^{\frac{n}{2}%
}d\mu \leq \epsilon .
\end{equation*}%
More precisely, on any $B\left( 10\delta \right) \subset U$, there is a
harmonic coordinate chart such that for any $1<p<\infty ,$%
\begin{equation*}
C^{-1}\delta _{ij}\leq g_{ij}\leq C\delta _{ij},
\end{equation*}%
and%
\begin{equation*}
\left\Vert g_{ij}\right\Vert _{W^{2,p}}\leq C.
\end{equation*}
\end{prop}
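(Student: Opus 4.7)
The plan is to establish the existence of a definite lower bound on the harmonic radius, and from there derive the $W^{2,p}$ bounds by elliptic regularity. Define, for each $x\in M$, the harmonic radius $r_h(x)$ as the supremum of radii $r$ such that the geodesic ball $B(x,r)$ admits a harmonic coordinate chart satisfying the two stated inequalities for the given constant $C$. The heart of the proposition is to show that on the union $U$ of balls $B(r)$ with small $L^{n/2}$ curvature, one has $r_h\ge 10\delta$ for a uniform $\delta=\delta(\Lambda,v_0,n,C)$. The $W^{2,p}$ bound then follows because, in harmonic coordinates, $\mathrm{Ric}$ is (up to lower-order terms quadratic in $\partial g$) $-\tfrac{1}{2}g^{kl}\partial_k\partial_l g_{ij}$, a uniformly elliptic operator on $g$, so standard $L^p$ elliptic estimates convert the bound $|\mathrm{Ric}|\le\Lambda$ plus $C^0$ control of $g$ into $W^{2,p}$ control of $g$.

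The uniform lower bound on $r_h$ I would prove by a blowup/contradiction argument. Suppose it fails; then there exist sequences $(M_i,g_i,x_i)$ with $|\mathrm{Ric}(g_i)|\le\Lambda$, volume non-collapsed, and $\int_{B(x_i,4r)}|\mathrm{Rm}|^{n/2}\le\epsilon_i\to 0$, yet $r_h(x_i)=\rho_i\to 0$. Rescale $\tilde g_i:=\rho_i^{-2}g_i$ so that the harmonic radius at $x_i$ equals $1$. Under this rescaling, $|\mathrm{Ric}(\tilde g_i)|\le\Lambda\rho_i^2\to 0$, the $L^{n/2}$-norm of $\mathrm{Rm}$ is scale invariant and still tends to $0$, and the volume ratios on unit balls remain bounded below by an Euclidean-type constant (via Bishop–Gromov and the non-collapsing assumption). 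By Anderson's own convergence result (the $W^{2,p}$ version of Proposition \ref{prop:2}, applied where harmonic coordinates of a slightly smaller radius exist by definition of $r_h$), a subsequence converges in the pointed $W^{2,p}$ and hence $C^{1,\alpha}$ sense to a limit $(X_\infty,\tilde g_\infty,x_\infty)$.

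On the limit, $\mathrm{Ric}(\tilde g_\infty)=0$ and $\int|\mathrm{Rm}|^{n/2}=0$, so $\tilde g_\infty$ is flat. Combined with the volume non-collapsing, $X_\infty$ is an open subset of $\RR^n$, on which the harmonic radius is infinite. By the lower semi-continuity of $r_h$ under $C^{1,\alpha}$ convergence, this contradicts $r_h(x_i)=1$. Hence the blowup assumption is false, and $r_h(x)\ge10\delta$ throughout $U$ for a uniform $\delta$. The parameter $\sigma$ controlling the number of balls needed to cover follows from Bishop–Gromov and the diameter bound, which give a packing/covering estimate depending only on $\Lambda$, $v_0$, $n$, $D$.

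The main obstacle is the $\epsilon$-regularity / passage to the limit step: one needs to know that smallness of $\int|\mathrm{Rm}|^{n/2}$ on a ball, together with bounded Ricci and non-collapsed volume, is enough to pass to a Cheeger--Gromov subsequential limit and to conclude that the harmonic radius is, in the limit, at least $1$ (not smaller due to some boundary or volume loss). Equivalently, the delicate technical point is the lower semi-continuity of $r_h$ under $W^{2,p}$/$C^{1,\alpha}$ convergence together with the fact that harmonic coordinates on the approximants can be chosen convergent. Once this is in place, the contradiction argument and the subsequent $W^{2,p}$ elliptic estimate for the Einstein-type equation $g^{kl}\partial_k\partial_l g_{ij}=-2Q_{ij}(g,\partial g)-2\mathrm{Ric}_{ij}$ are routine.
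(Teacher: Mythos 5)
The paper does not prove this proposition at all: it is imported verbatim as a black box from Anderson's paper (Proposition 2.5, Lemma 2.2 and Remarks 2.3 of \cite{An2}), so there is no in-paper argument to compare against. Your sketch is essentially a correct reconstruction of Anderson's own proof: the contradiction/blow-up scheme, the scale invariance of $\int|\mathrm{Rm}|^{n/2}$, the vanishing of the rescaled Ricci bound, the non-collapsing via Bishop--Gromov, the identification of the limit as flat $\mathbb{R}^{n}$ with infinite harmonic radius, the semicontinuity of the harmonic radius to reach the contradiction, the elliptic $L^{p}$ estimate in harmonic coordinates to upgrade to $W^{2,p}$, and the packing argument for $\sigma(\Lambda,v_{0},n,D)$ are all the right ingredients in the right order.

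One genuine omission: controlling $r_{h}$ only at the single base point $x_{i}$ is not enough to run the compactness step. After rescaling so that $r_{h}(x_{i})=1$, you need harmonic coordinates of a definite size at \emph{all} points within any fixed $\tilde g_{i}$-distance $R$ of $x_{i}$ in order to apply the $W^{2,p}$ version of Proposition \ref{prop:2} on $B_{\tilde g_{i}}(x_{i},R)$ and extract a limit that is complete (hence all of $\mathbb{R}^{n}$ rather than a small flat piece on which $r_{h}$ could still be finite). The standard fix, which Anderson uses, is to choose $x_{i}$ to (almost) minimize the harmonic radius, or a ratio such as $r_{h}(x)/\mathrm{dist}(x,\partial)$, over the region in question; then every point in a rescaled ball of radius $R$ about $x_{i}$ has harmonic radius bounded below by a fixed constant for large $i$, and the blow-up limit is genuinely complete and flat. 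With that point-selection step added, and with the continuity of the harmonic radius in the $C^{1,\alpha}$ topology (which you correctly flag as the delicate ingredient), the argument closes.
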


Now we need the following lemma before proving Theorem \ref{thm:1}.

\begin{lm}
\label{lem:3} Let $(M,\mathcal{L},g,F,\lambda )$ be an EYM system with $%
\lambda \in \left[ -1,1\right] .$ Assume that
\begin{equation*}
\mathrm{Vol}_{M}\geq v>0, \ \ and \ \ \int_{M}\left\vert F\right\vert
^{2}d\mu \leq \Omega ,
\end{equation*}%
then there is a constant $C_{1}=C_{1}(v,\Omega ,n)$ such that
\begin{equation*}
\left\vert \mathrm{Ric}\left( g\right) \right\vert \leq C_{1}.
\end{equation*}
Furthermore, if $M$ is a $4$-manifold and
\begin{equation*}
\mathrm{diam}_{M}\leq D, \ \ \ b_{2}\left( M\right) \leq c_{0},
\end{equation*}
then there is a constant $C_{2}=C_{2}(v,\Omega ,D,c_{0})$ such that%
\begin{equation*}
\int_{M}\left\vert \mathrm{Rm}\left( g\right) \right\vert ^{2}d\mu \leq
C_{2}.
\end{equation*}
\end{lm}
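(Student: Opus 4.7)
My plan treats the two assertions in turn, and in both cases I would lean heavily on Lemma~\ref{lam:1}, which tells us that $|F|^2$ (and hence also $R$) is a \emph{constant} on $M$.

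For the pointwise Ricci bound, the constancy of $|F|^2$ forces $|F|^2 \cdot \mathrm{Vol}_M = \int_M |F|^2 d\mu \leq \Omega$, so $|F|^2 \leq \Omega/v$. The tensor $\eta_{ij} = g^{kl} F_{ik} F_{jl}$ is pointwise positive semi-definite (since $\eta(X,X) = |\iota_X F|^2 \geq 0$) with trace $|F|^2$; its eigenvalues are therefore nonnegative and sum to $|F|^2$, which gives $|\eta|_g \leq |F|^2 \leq \Omega/v$. Rewriting the first equation of (\ref{eq:1}) as $\mathrm{Ric} = \frac{1}{2}\eta + \lambda g$ and using $|\lambda| \leq 1$, I would obtain
\begin{equation*}
|\mathrm{Ric}|_g \leq \frac{1}{2}|\eta|_g + \sqrt{n} \leq \frac{\Omega}{2v} + \sqrt{n},
\end{equation*}
which defines the constant $C_1 = C_1(v,\Omega,n)$.

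For the dimension-four $L^2$ curvature bound, I would invoke the Chern-Gauss-Bonnet identity
\begin{equation*}
\chi(M) = \frac{1}{32\pi^2}\int_M \bigl(|\mathrm{Rm}|^2 - 4|\mathrm{Ric}|^2 + R^2\bigr) d\mu,
\end{equation*}
which rearranges to $\int_M |\mathrm{Rm}|^2 d\mu = 32\pi^2 \chi(M) + 4\int_M |\mathrm{Ric}|^2 d\mu - \int_M R^2 d\mu$. Because $R$ is constant by Lemma~\ref{lam:1}, the last integral is nonnegative and can be discarded in an upper bound. The Euler characteristic is controlled via the Betti number expansion $\chi(M) \leq b_0 + b_2 + b_4 \leq 2 + c_0$, using the hypothesis $b_2 \leq c_0$. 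For the Ricci integral, the first part of the lemma gives the pointwise bound $|\mathrm{Ric}| \leq C_1$, and Bishop-Gromov volume comparison (applied with $\mathrm{Ric} \geq -C_1 g$ and $\mathrm{diam}_M \leq D$) furnishes an upper volume bound $\mathrm{Vol}_M \leq V(C_1, D)$, so $\int_M |\mathrm{Ric}|^2 d\mu \leq C_1^2 V$. Combining these estimates yields the desired $C_2 = C_2(v,\Omega,D,c_0)$.

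I do not anticipate any serious obstacle, since both parts reduce to well-known identities once the structural information in Lemma~\ref{lam:1} is exploited. The only point that requires real care is the sign bookkeeping in Chern-Gauss-Bonnet: the $\int R^2$ term enters with a minus sign and may be safely dropped when producing an upper bound, whereas $\chi(M)$ must be estimated \emph{from above}, which is precisely where the topological assumption $b_2 \leq c_0$ enters.
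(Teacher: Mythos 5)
Your proposal is correct and follows essentially the same route as the paper: both parts hinge on Lemma~\ref{lam:1} forcing $|F|^2$ to be constant (hence $|F|^2\le\Omega/v$), the Ricci bound comes from $\mathrm{Ric}=\tfrac12\eta+\lambda g$ with $|\eta|\le|F|^2$, and the $L^2$ curvature bound comes from the four-dimensional Gauss--Bonnet--Chern formula, dropping the $-\int R^2$ term, bounding $\chi(M)\le 2+b_2(M)$, and controlling $\int|\mathrm{Ric}|^2$ via the pointwise Ricci bound together with a Bishop-type upper volume bound. The only differences are cosmetic (triangle inequality versus expanding the square for $C_1$, and the normalization constant in Gauss--Bonnet), so no further comment is needed.
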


\begin{proof}
Since $(M,\mathcal{L},g,F,\lambda )$ is an EYM system, by Lemma \ref{lam:1},
$\left\vert F\right\vert $ is a constant, and
\begin{equation*}
\left\vert F\right\vert ^{2}\mathrm{Vol}_{M}=\int_{M}\left\vert F\right\vert
^{2}d\mu \leq \Omega .
\end{equation*}%
By the assumption $\mathrm{Vol}_{M}\geq v>0,$ we have
\begin{equation*}
\left\vert F\right\vert ^{2}\leq \frac{\Omega }{v}.
\end{equation*}%
Then it is clear that
\begin{eqnarray*}
\left\vert \mathrm{Ric}\left( g\right) \right\vert ^{2} &=&\left\vert \frac{1%
}{2}\eta \left( g,F\right) +\lambda g\right\vert ^{2} \\
&\leq &\frac{1}{4}\left\vert F\right\vert ^{4}+\lambda \left\vert
F\right\vert ^{2}+n\lambda ^{2} \\
&\leq &C_{1}(\Omega ,v,n).
\end{eqnarray*}

From now on, we assume that the dimension of $M$ is $4$, $\mathrm{diam}%
_{M}\leq D,$ and $b_{2}\left( M\right) \leq c_{0}.$ The Bishop
comparison theorem implies  that the volume of $M$ is bounded above
by a constant $c\left( \Omega ,v,D,n\right) ,$ i.e.
$$\mathrm{Vol}_{M}\leq c\left( \Omega ,v,D,n\right). $$

By the standard Gauss-Bonnet-Chern formula (cf. \cite{Be}), the Euler
characteristic $\chi \left( M\right) $ of an oriented compact $4$-manifold $%
M $ can be expressed as follows,%
\begin{equation*}
\chi \left( M\right) =\frac{1}{8\pi ^{2}}\int_{M}\left( \left\vert \mathrm{Rm%
}\right\vert ^{2}-4\left\vert \mathrm{Ric}\right\vert ^{2}+R^{2}\right) d\mu
.
\end{equation*}%
Thus%
\begin{eqnarray*}
\frac{1}{8\pi ^{2}}\int_{M}\left\vert \mathrm{Rm}\left( g\right) \right\vert
^{2}d\mu &=&\chi \left( M\right) +\frac{1}{8\pi ^{2}}\int_{M}\left(
4\left\vert \mathrm{Ric}\left( g\right) \right\vert ^{2}-R^{2}\left(
g\right) \right) d\mu \\
&\leq &2+b_{2}\left( M\right) +\frac{\max_{M}\left\vert \mathrm{Ric}\left(
g\right) \right\vert ^{2}}{2\pi ^{2}}\mathrm{Vol}_{M} \\
&\leq &C_{2}(v,\Omega ,D,c_{0}).
\end{eqnarray*}
\end{proof}

Now we are ready to prove the main theorem of this paper.

\begin{proof}[Proof of Theorem \protect\ref{thm:1}]
By Lemma \ref{lem:3} we know that under the assumption of Theorem 1
the Ricci curvatures and  the $L^{\frac{n}{2}}$-norms of the
curvature tensors of $g_{i}$ have uniform bounds. By Theorem 2.6 in
\cite{An2}, a
subsequence of $(M_{i},g_{i})$ converges to a  Riemannian orbifold $%
(M_{\infty},g_{\infty})$ with finite isolated singular points in the
Gromov-Hausdorff sense. Furthermore, $g_{i}$ $C^{1,\alpha}$-converges to $%
g_{\infty}$ on the regular part in the Cheeger-Gromov sense. Now we improve
the convergence to the $C^{\infty}$-sense, and study the convergence of $%
F_{i}$.

For a given $r>0,$ let $\{B_{x_{k}}^{i}(r)\}$ be a family of metric balls of
radius $r$ such that $\{B_{x_{k}}^{i}(r)\}$ covers $(M_{i},g_{i})$, and   $B_{x_{k}}^{i}(\frac{r}{2%
})$ are disjoint. Denote%
\begin{equation*}
G_{i}(r)=\cup \left\{ \left. B_{x_{k}}^{i}(r)\right\vert
\int_{B_{x_{k}}^{i}(4r)}\left\vert \mathrm{Rm}\left( g_{i}\right)
\right\vert ^{2}d\mu _{i}\leq \epsilon \right\} ,
\end{equation*}%
where $\epsilon =\epsilon \left( \Omega ,v,D,n,C_{3}\right) >0$ is
obtained in Proposition \ref{prop:3} for a constant $C_{3}>1$. So
$G_{i}(r)$ are covered by a  $\left( \delta ,\sigma ,W^{2,p}\right)
$ $\left( \text{for any }1<p<\infty \right) $ adapted atlas with the
harmonic radius uniformly bounded from below by virtue of
Proposition \ref{prop:3}. In these coordinates we have $W^{2,p}$
bounds for the metrics, i.e.
\begin{equation*}
C_{3}^{-1}\delta _{jk}\leq g_{i,jk}\leq C_{3}\delta _{jk},
\end{equation*}%
and%
\begin{equation*}
\left\Vert g_{i}\right\Vert _{W^{2,p}}\leq C_{3}.
\end{equation*}%
Since the curvature tensor $\mathrm{Rm}\left( g\right) =\partial
^{2}g+\partial g\ast \partial g,$ for any $1<p<\infty ,$ we have%
\begin{equation*}
\left\Vert \mathrm{Rm}(g_{i})\right\Vert _{L^{p}}\leq C\left( n,\left\Vert
g\right\Vert _{W^{2,2p}}\right) .
\end{equation*}%
By Proposition $1$ and Lemma 3, we get an uniform $W^{1,2p}$ estimate for
the $2$-forms $F_{i},$%
\begin{eqnarray*}
\left\Vert F_{i}\right\Vert _{W^{1,2p}}^{2} &\leq &C\left( 1+\left\Vert
\mathrm{Rm}(g_{i})\right\Vert _{L^{p}}\right)  \\
&\leq &C(p,n,\Omega ,v,C_{3}).
\end{eqnarray*}%
By the Sobolev embbeding theorem we know that\ for all $0<\alpha \leq 1-%
\frac{n}{2p},$%
\begin{equation*}
\left\Vert g_{i}\right\Vert _{C^{1,\alpha }}\leq C_{4},\text{ \ }
\end{equation*}%
\begin{equation*}
\left\Vert F_{i}\right\Vert _{C^{\alpha }}^{2}\leq C_{5},
\end{equation*}%
 where
 $C_{4}$ and $C_{5}>0$ are two constants.

Note that the EYM equations are elliptic equations (\ref{eq:3}) and (\ref%
{eq:4}) in harmonic coordinates by Lemma 2. Now we apply the
Schauder
estimate for elliptic partial differential equations (cf. \cite{GT}) to (\ref%
{eq:3}), and obtain
\begin{equation*}
\left\Vert g_{i}\right\Vert _{C^{2,\alpha }}\leq C(n,\left\Vert
g_{i}\right\Vert _{C^{1,\alpha }},\left\Vert F_{i}\right\Vert _{C^{\alpha
}}^{2})\leq C_{6},
\end{equation*}%
for a constant $C_{6}>0$ independent of $i$. By applying $L^{p}$-estimates
for elliptic differential equations (cf. \cite{GT}) to (\ref{eq:4}),
\begin{equation*}
\left\Vert F_{i}\right\Vert _{W^{2,2p}}\leq C(n,\left\Vert
g_{i}\right\Vert _{C^{2,\alpha }},\left\Vert F_{i}\right\Vert
_{L^{2p}}^{2})\leq C_{7},
\end{equation*}%
where $C_{7}>0$ is a constant independent of $i$. Then
\begin{equation*}
\left\Vert F_{i}\right\Vert _{C^{1,\alpha }}\leq C_{8},
\end{equation*}%
by the Sobolev embedding theorem. By standard elliptic theory (cf. \cite{GT}%
), for any $l\geq 0$ and $0<\alpha <1,$ we obtain uniform $C^{l,\alpha }$
bounds for the metrics $g_{i}$ and the bundle curvatures $F_{i},$%
\begin{equation*}
\left\Vert g_{i}\right\Vert _{C^{l,\alpha }}\leq C(l,\alpha ,n,\Omega
,v,C_{3}),
\end{equation*}%
\begin{equation*}
\left\Vert F_{i}\right\Vert _{C^{l-1,\alpha }}\leq C(l,\alpha ,n,\Omega
,v,C_{3}).
\end{equation*}%
Hence all the covariant derivatives of the curvature tensor have uniform
bounds. By Proposition \ref{prop:2}, there is a subsequence of $G_{i}\left(
r\right) $ converges in the $C^{\infty }$ topology, to an open manifold $%
G_{r}$ with a smooth metric $g_{r}$ and a harmonic $2$-form $F_{r}.$

Because $\int_{M_{i}}\left\vert \mathrm{Rm}\left( g_{i}\right) \right\vert ^{%
\frac{n}{2}}d\mu _{i}$ is uniformly bounded, there are finitely many of
disjoint balls $B\left( r\right) $ on which $\int_{B\left( 4r\right)
}\left\vert \mathrm{Rm}\right\vert ^{2}d\mu >\epsilon .$ And the number of
such disjoint balls are independent of $r.$

Now choose $r_{j}\rightarrow 0,$ with $r_{j+1}\leq
\frac{1}{2}r_{j}.$ Let
\begin{equation*}
G_{i}^{j}=\left\{ \left. x\in M\right\vert x\in G_{i}\left( r_{m}\right)
\text{ for some }m\leq j\right\} ,
\end{equation*}%
then%
\begin{equation*}
G_{i}^{1}\subset G_{i}^{2}\subset G_{i}^{3}\subset \cdots \subset M_{i}.
\end{equation*}%
By using the diagonal argument,  a subsequence of $\left(
G_{i}^{j},g_{i},F_{i},\lambda _{i}\right) ,$  converges, in the $%
C^{l,\alpha }$ topology for all $l\geq 0$, to an open manifold $M_{\infty
}^{o}$ with a smooth Riemannian metric $\widehat{g}_{\infty }$, a harmonic $%
2 $-form $F_{\infty }$ and a constant $\lambda _{\infty }\in \left[ -1,1%
\right] ,$ such that $\left( \widehat{g}_{\infty },F_{\infty },\lambda
_{\infty }\right) $ satisfies the EYM equations. More precisely, for any
compact subset $K\subset \subset M_{\infty }^{o},$ there are embeddings $%
\Phi _{K}^{i}:K\rightarrow M_{i}$ such that
\begin{equation}  \label{eq:5}
\Phi _{K}^{i,\ast }g_{i}\overset{C^{l,\alpha }}{\rightarrow }\widehat{g}%
_{\infty },\ \ \ \Phi _{K}^{i,\ast }F_{i}\overset{C^{l,\alpha }}{\rightarrow
}F_{\infty },\ \ \ \text{and}\ \ \ \lambda _{i}\rightarrow \lambda _{\infty
},  \tag{5}
\end{equation}%
when $i\rightarrow \infty $ for any $l\geq 0$ and $0<\alpha <1$.

For a compact subset $K\subset \subset M_{\infty }^{o},$ we fix a
decomposition $H^{2}(K; \mathbb{Z})\cong H^{2}_{F}(K; \mathbb{Z})\oplus
H^{2}_{T}(K; \mathbb{Z})$ where $H^{2}_{T}(K; \mathbb{Z})$ is the torsion
part and $H^{2}_{F}(K; \mathbb{Z})$ is the free part,  which is a lattice in $%
H^{2}(K; \mathbb{R})$. Then the first Chern class $c_{1}(\Phi _{K}^{i,-1}%
\mathcal{L}_{i})=c_{1,i}^{F}+c_{1,i}^{T}$, where $c_{1,i}^{F}\in
H^{2}_{F}(K; \mathbb{Z})$, $c_{1,i}^{T}\in H^{2}_{T}(K; \mathbb{Z})$, and $%
\frac{\sqrt{-1}}{2\pi }\Phi _{K}^{i,*}F_{i}$ represents $c_{1,i}^{F}$ by the
standard Chern-Weil theory, i.e. $\left[ \frac{\sqrt{-1}}{2\pi }\Phi
_{K}^{i,*}F_{i}\right]=c_{1,i}^{F}\in H^{2}_{F}(K; \mathbb{Z})\subset
H^{2}_{F}(K; \mathbb{R}) $. By (\ref{eq:5}), $c_{1,i}^{F}=\left[ \frac{\sqrt{%
-1}}{2\pi }\Phi _{K}^{i,*} F_{i}\right]$ converges to $c_{1,\infty}^{F}=%
\left[ \frac{\sqrt{-1}}{2\pi }F_{\infty}\right]$ in $H^{2}_{F}(K; \mathbb{R}%
) $, which implies that, by passing to a subsequence, $c_{1,i}^{F}
=c_{1,\infty}^{F}$  for $i\gg 1$. Since there are only finite
elements in $H^{2}_{T}(K; \mathbb{Z})$, by passing to a subsequence,
we obtain that $c_{1,i}^{T}\equiv c_{1,\infty}^{T}$
in $H^{2}_{T}(K; \mathbb{Z})$. Hence there is a $U(1)$-bundle $\mathcal{L}%
_{K,\infty}$ on $K$ such that the first Chern class $c_{1}(\mathcal{L}%
_{K,\infty})=c_{1,\infty}^{F}+c_{1,\infty}^{T}$, $\mathcal{L}%
_{K,\infty}=\Phi _{K}^{i,-1}\mathcal{L}_{i}$ and $F_{\infty}$ is the
curvature of a Yang-Mills connection on $\mathcal{L}_{K,\infty}$. Now we
take a sequence of compact subsets $K_{1}\subset K_{2}\subset \cdots \subset
M_{\infty }^{o}$ with $\cup K_{a}=M_{\infty }^{o}$. By the standard diagonal
argument, there is a $U(1)$-bundle $\mathcal{L}_{\infty }$ over $M_{\infty
}^{o}$ such that $F_{\infty }$ is the curvature of a Yang-Mills connection $%
A_{\infty }$, and, for $i\gg 1$,
\begin{equation*}
\mathcal{L}_{\infty}=\Phi _{K_{a}}^{i,-1}\mathcal{L}_{i}.
\end{equation*}

The remainder of the proof is the same as arguments in Theorem C in \cite%
{An1} and Theorem 2.6 in \cite{An2}. For the convenience of readers we
sketch the proof here, and refer to the two papers for more details.

Let $M_{\infty }=M_{\infty }^{o}\cup \{p_{k}\}_{k=1}^{N}$ be the
metric completion of $M_{\infty }^{o}.$ By using a blow up argument
in \cite{An1},  $M_{\infty }$ is a connected orbifold with a finite
number of singular points $\{p_{k}\}_{k=1}^{N}\subset M_{\infty },$
and the metric can be extended to a $C^{0}$ orbifold Riemannian
metric on $M_{\infty }.$ Furthermore, each singular point $p_{k}$
has a neighborhood homeomorphic to the cone $C\left( S^{n-1}/\Gamma
_{k}\right) ,$ with $\Gamma _{k}$ a finite subgroup of $O\left(
n\right) .$ \
\end{proof}

\bigskip \textbf{Acknowledgement. }I am grateful to Professor F.Q. Fang for
his brilliant guidance and to Professor Y.G. Zhang for a lot of stimulating
conversations. \bigskip


\begin{thebibliography}{99}
\bibitem{An1} M.T.Anderson, \emph{Ricci curvature bounds and Einstein
metrics on compact manifolds, } \textrm{J. Am. Math. Soc. 2, (1989)
455--490.}

\bibitem{An2} M.T.Anderson, \emph{Convergence and rigidity of manifolds
under Ricci curvature bounds, }\textrm{Invent. math. 102, (1990)
429--445.}

\bibitem{An3} M.T.Anderson, \emph{Einstein metrics and metrics with bounds
on Ricci curvature,} \textrm{Proceedings of International Congress
of Mathematicians, Vol.1,2(Z\"{u}rich, 1994) Birkh\"{a}user,
443--452.}

\bibitem{BKN} S.Bando, A.Kasue, H.Nakajima, \emph{On a construction of
coordinates at infinity on manifolds with fast curvature decay and
maximal volume growth,} \textrm{Inventiones Math. 97 (1989)
313--349.}

\bibitem{Be} A.L.Besse, \emph{Einstein manifolds,} \textrm{Ergebnisse der
Math. Springer-Verlag, Berlin-New York (1987).}

\bibitem{Ch} J.Cheeger, \emph{Finiteness theorems for Riemannian manifolds, }%
\textrm{Amer. J. Math. 92, (1970) 61--74.}



\bibitem{CT} J.Cheeger, G.Tian, \emph{Curvature and injective radius
estimates for Einstein 4-manifolds, }\textrm{J. Amer. Math. Soc.
19(2) (2006) 487--525.}



\bibitem{Che} S.S.Chern, \emph{Circle bundles,} \textrm{Lecture Notes in
Mathematics, 597(1977), Springer Berlin/Heidelberg.}

\bibitem{Go} M. Gromov, {\em Metric structures for Riemannian and
non-Riemannian spaces}, Birkh\"{a}user 1999.

\bibitem{GT} D.Gilbarg, N.Trudinger, \emph{Elliptic partial differential
equations of second order, }\textrm{Springer-Verlag, New York, (1977).}

\bibitem{GW} R.Greene, H.Wu, \emph{Lipschitz convergence of Riemannian
manifolds,} \textrm{Pacific J. Math. 131, (1988) 119--141.}

\bibitem{Hi} N.Hitchin, \emph{On Compact four-Dimensional Einstein Manifolds}%
, \textrm{J. Differential Geometry, Vol.9 (1974) 435--441.}

\bibitem{LeB} C.Lebrun, \emph{The Einstein-Maxwell Equations,Extremal K\"{a}%
hler Metrics,and Seiberg-Witten Theory}, \textrm{%
The Many Facets of Geometry: A Tribute to Nigel Hitchin, Oxford
University Press (2010).  }

\bibitem{LeB2} C.Lebrun, \emph{Einstein metrics, Four-Manifolds, and
Diffrerntial Topology, }\textrm{Surveys in Differential Geometry,
vol. VIII: Papers in Honor of Calabi, Lawson, Siu, and Uhlenbeck.
 International Press of Boston, (2003) 235--255.}

\bibitem{Pe} P.Peterson, \emph{Riemannian Geometry, }\textrm{Science Press,
(2007).}

\bibitem{St} J.Streets, \emph{Ricci Yang-Mills Flow, }\textrm{Ph.D.
Dissertation at The Department of Mathematics of Duke University, (2007).}

\bibitem{Ti} G.Tian, \emph{On Calabi's conjecture for complex surfaces with
positive first Chern class.} \textrm{Invent. math. 101 (1990),
101--172.}

\bibitem{Yo} A.Young, \emph{Modified Ricci flow on a Principal Bundle, }%
\textrm{Ph.D.  Dissertation at The University of Texas, (2008).}
\newline
\newline
\newline
\newline
\newline
ADDRESS: Department of Mathematics, Capital Normal University,
Beijing, China \newline  E-MAIL: liupinjinshi@gmail.com
\end{thebibliography}
\end{document}